\newtheorem{thm}{Theorem}[section]
\newtheorem{proposition}[thm]{Proposition}
\newtheorem{claim}[thm]{Claim}
\theoremstyle{definition}
\newtheorem{definition}[thm]{Definition}
\newcommand{\pr}{\mathbb{P}}
\newcommand{\Z}{\mathbb{Z}}
\newcommand{\Q}{\mathbb{Q}}
\newcommand{\R}{\mathbb{R}}
\newcommand{\C}{\mathbb{C}}
\newcommand{\NE}{\operatorname{NE}}
\newcommand{\Rat}{\operatorname{RatCurves}^n}
\newcommand{\Bl}{\operatorname{Bl}}
\newcommand{\Locus}{\operatorname{Locus}}
\newcommand{\sO}{\mathcal{O}}
\newcommand{\sN}{\mathcal{N}}
\title{Fano manifolds which are not slope stable along curves}
\author{Kento Fujita}
\begin{document}
\maketitle
\begin{abstract}{\noindent We show that a Fano manifold $(X,-K_X)$ is \emph{not} slope stable 
with respect to a smooth curve $Z$ if and only if $(X,Z)$ is isomorphic to one of 
(projective space, line), (product of projective line and projective space, fiber of second projection) or 
(blow up of projective space along linear subspace of codimension two, nontrivial fiber of blow up). 
}
\end{abstract}

\section{Introduction}
In this paper, we refine the result of Hwang, Kim, Lee and Park \cite{ylee} on the 
slope stability of a Fano manifold with respects to smooth curves. 

Let $X$ be a Fano manifold, that is, a smooth projective variety 
whose anticanonical divisor $-K_X$ is ample. It is conjectured that the $K$-polystability of $(X,-K_X)$ is 
equivalent to the existence of a K\"ahler-Einstein metric, 
but it is hard to investigate the $K$-(poly, semi)stability in general. 
On the other hand, Ross introduced the notion of the slope stability for polarized varieties (see \cite{RT}). 
The slope stability is weaker than the $K$-stability, and it is easier to investigate. 

Recently, Hwang, Kim, Lee and Park \cite{ylee} studied the slope stability of Fano manifolds and proved the following. 
If a Fano $n$-fold $(X,-K_X)$ is not slope stable with respect to a smooth curve $Z$, 
then $Z$ is a rational curve whose Seshadri constant $\epsilon(Z,X)$ is larger than $n-1$. 
Moreover, the normal bundle $\sN_{Z|X}$ is 
either trivial or $\sO_{\pr^1}^{\oplus n-2}\oplus\sO_{\pr^1}(-1)$ 
unless $Z$ is a line on a projective space (see Proposition \ref{yleesesh} and Theorem \ref{yleemain}). 

In this article, we give the complete classification of Fano manifolds which are not slope stable 
with respect to smooth curves using the results of \cite{ylee} and some classification results of Tsukioka \cite{tsu}. 

\begin{thm}[Main Theorem]\label{mainthm}
Let $X$ be a smooth Fano $n$-fold with $n\geq 3$ and let $Z\subset X$ be a smooth curve. 
Then we have the following.
\begin{enumerate}
\item
$(X,-K_X)$ is not slope stable with respect to $Z$ if and only if $(X,Z)$ is isomorphic to one of 
$(\pr^n,\text{line}),(\pr^1\times\pr^{n-1},\pr^1\times\text{pt})$ or $(\Bl_{\pr^{n-2}}\pr^n,\text{excp.line}).$
\item
$(X,-K_X)$ is not slope semistable with respect to $Z$ if and only if $(X,Z)$ is isomorphic to 
$(\Bl_{\pr^{n-2}}\pr^n,\text{excp.line}).$
\end{enumerate}
Here $\Bl_{\pr^{n-2}}\pr^n\rightarrow\pr^n$ is the blow up of $\pr^n$ along a linear subspace of codimension $2$ and 
\emph{excp.line} in $\Bl_{\pr^{n-2}}\pr^n$ is a nontrivial fiber of 
the blow up $\Bl_{\pr^{n-2}}\pr^n\rightarrow\pr^n$. 
\end{thm}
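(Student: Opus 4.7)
The plan is to prove the two directions separately. The ``if'' direction of (1) and (2) is a direct slope computation in each of the three listed models: in $(\pr^n,\text{line})$ and $(\pr^1\times\pr^{n-1},\pr^1\times\text{pt})$ one verifies that the slope inequality with respect to $Z$ becomes an equality (so stability fails but semistability holds), while in the blow-up model $(\Bl_{\pr^{n-2}}\pr^n,\text{excp.line})$ one verifies that the inequality is strict (so even semistability fails). These are one-parameter computations using the Seshadri constant along $Z$ and intersection numbers on the blow-up, which I would not dwell on.

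For the converse in (1), assume $(X,-K_X)$ fails to be slope stable with respect to a smooth curve $Z$. Invoking Proposition~\ref{yleesesh} and Theorem~\ref{yleemain} of \cite{ylee}, we get that $Z\cong\pr^1$ with $\epsilon(Z,X)>n-1$, and one of three things holds: (a) $Z$ is a line in $\pr^n$, (b) $\sN_{Z|X}\cong\sO_{\pr^1}^{\oplus n-1}$, or (c) $\sN_{Z|X}\cong\sO_{\pr^1}^{\oplus n-2}\oplus\sO_{\pr^1}(-1)$. Case (a) immediately produces the first model, so the real work is to identify $X$ in cases (b) and (c).

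In case (b) the space $H^0(Z,\sN_{Z|X})$ has dimension $n-1$ and $Z$ is free, so deformations of $Z$ sweep out all of $X$; since the normal bundle is trivial, these deformations form an unsplit covering family of rational curves of anticanonical degree $-K_X\cdot Z$. Running the Mori contraction along the extremal ray $\R_{\geq 0}[Z]$ produces a $\pr^1$-fibration $\pi\colon X\to Y$ whose fibers are precisely the deformations of $Z$. The hypothesis $\epsilon(Z,X)>n-1$, together with the structure of $\pi$, then forces $Y\cong\pr^{n-1}$ and the fibration to be trivial; here I would apply Tsukioka's classification \cite{tsu} of Fano manifolds that carry a covering family of rational curves of this type to conclude $X\cong\pr^1\times\pr^{n-1}$ and $Z$ a fiber of the second projection. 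In case (c), $h^0(Z,\sN_{Z|X})=n-2$, so deformations of $Z$ sweep out an $(n-1)$-dimensional locus $E\subset X$; the corresponding contraction of $\R_{\geq 0}[Z]$ is divisorial, and $Z$ is a nontrivial fiber over a codimension-$2$ center. Tsukioka's classification of Fano manifolds admitting such a blow-down, combined with the sharp Seshadri bound $\epsilon(Z,X)>n-1$, then pins down $X\cong\Bl_{\pr^{n-2}}\pr^n$ with $Z$ an exceptional line, finishing (1). Part (2) then follows by noting that in case (a) and case (b) the slope along $Z$ is only an equality (so $X$ is slope semistable with respect to $Z$), and strict failure of the slope inequality occurs only in case (c).

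The main obstacle is case (c): extracting from the mere existence of one smooth rational curve with the right normal bundle and the Seshadri lower bound a full divisorial Mori contraction of the expected type, and then matching it precisely against Tsukioka's classification list to rule out other Fano manifolds with a $\pr^{n-2}$-family of exceptional $\pr^1$'s. The Seshadri hypothesis $\epsilon(Z,X)>n-1$ is the crucial numerical input that prevents other exceptional contractions (with higher-dimensional centers or non-linear targets) from occurring, and careful bookkeeping of $-K_X\cdot Z$ versus the dimension of the deformation space is what ultimately isolates the stated model.
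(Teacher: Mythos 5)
Your opening reduction (via Proposition \ref{yleesesh} and Theorem \ref{yleemain}, splitting into the line case, the trivial normal bundle case, and the $\sO_{\pr^1}^{\oplus n-2}\oplus\sO_{\pr^1}(-1)$ case) coincides with the paper's, and your deferral of the ``if'' direction is mostly harmless: the paper itself disposes of $(\pr^n,\text{line})$ and $(\pr^1\times\pr^{n-1},\pr^1\times\text{pt})$ by citing \cite{ylee}, the one genuinely nontrivial point being $\epsilon(Z,X)\geq n$ for $(\Bl_{\pr^{n-2}}\pr^n,\text{excp.line})$, which the paper proves by writing $\pi^*(-K_X)-nE$ as a sum of two base-point-free linear systems. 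The converse, however, is where your proposal has genuine gaps, and they sit exactly at the steps you assert rather than prove. First, you contract ``the extremal ray $\R_{\geq 0}[Z]$'' in cases (b) and (c), but nothing guarantees that $[Z]$ spans an extremal ray of $\overline{\NE}(X)$, nor that the resulting contraction is a $\pr^1$-fibration (resp.\ a smooth divisorial blow-down with codimension-$2$ center). The paper never contracts along $[Z]$ at all: it passes to $\hat{X}=\Bl_Z X$, which is Fano because $-K_{\hat{X}}=\pi^*(-K_X)-(n-2)E$ and $\epsilon(Z,X)>n-1$, chooses an extremal ray $R$ with $(E\cdot R)>0$ --- a ray transverse to, not along, the deformations of $Z$ --- and Proposition \ref{seshprop} (using the dimension estimate of \cite{ACO} and the blow-up criterion of \cite{AO}) shows that if this ray is birational then there is a divisor $D\simeq\pr^{n-1}$ with $\sN_{D|X}\simeq\sO_{\pr^{n-1}}$ containing $Z$ as a line, forcing $\sN_{Z|X}\simeq\sO_{\pr^1}(1)^{\oplus n-2}\oplus\sO_{\pr^1}$ and contradicting Theorem \ref{yleemain}; only then does one know $\phi$ is of fiber type.

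Second, your appeal to Tsukioka misstates what is available: the propositions of \cite{tsu} used in the paper classify precisely the two-contraction configuration on $\hat{X}$ ($\pi$ a blow-down along a curve, $\phi$ of fiber type with $\dim Y\in\{n-2,n-1\}$), not Fano manifolds carrying a covering family of rational curves with prescribed normal bundle, so the implication you need --- that $\epsilon(Z,X)>n-1$ forces $Y\simeq\pr^{n-1}$ and triviality of the fibration --- is exactly the content to be proved, not a citation. Note that in your case (b) \emph{every} fiber of \emph{every} $\pr^1$-bundle Fano (e.g.\ $\pr(T_{\pr^2})$) has trivial normal bundle, so these must all be excluded, and it is the contraction on $\hat{X}$, not the family of deformations of $Z$ on $X$, that accomplishes this in the paper. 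Third, Tsukioka's results apply only for $n\geq 4$; the paper requires a separate argument for $n=3$ ($\rho_X=1$ handled via the index bound of \cite{MoMu83}, and $\rho_X\geq2$ by showing $\phi$ is a $\pr^1$-bundle with $E$ a section and computing $\phi_*\sO_{\hat{X}}(E)$ to identify $\hat{X}$, with \cite{MoMu} in the background), and your outline is silent on dimension $3$. Finally, your closing claim that part (2) ``follows by noting'' the slope is an equality in cases (a) and (b) again presupposes the computations you skipped; in the paper semistability in those cases is quoted from \cite{ylee}, and failure of semistability in case (c) needs both $(-K_X\cdot Z)=1$ and the Seshadri bound $\epsilon(Z,X)\geq n$ mentioned above.
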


Note that the normal bundle $\sN_{Z|X}$ is trivial in the case $(\pr^1\times\pr^{n-1},\pr^1\times\text{pt})$ 
and $\sO_{\pr^1}^{\oplus n-2}\oplus\sO_{\pr^1}(-1)$ in the case $(\Bl_{\pr^{n-2}}\pr^n,\text{excp.line})$, respectively. 

To prove Theorem \ref{mainthm}, we study Fano $n$-folds $X$ with smooth curves $Z\subset X$ 
whose Seshadri constants $\epsilon(Z,X)$ are larger than $n-1$ in Proposition \ref{seshprop}. 
Then Theorem \ref{mainthm} follows from the proposition immediately. 
We also classify such pairs $(X,Z)$ in Theorem \ref{appthm} using the result of Proposition \ref{seshprop} 
and the classification result of Fano $3$-fold \cite{MoMu}. 

\smallskip

\noindent\textbf{Acknowledgements.}
The author is grateful to Professor Shigeru Mukai for making various suggestions which helped him to improve 
Proposition \ref{seshprop}. 
He also expresses his gratitude to Professor Shigefumi Mori for his warm encouragement. 
The author thanks both professors and Professor Kenji Matsuki for reading the preliminary version of the paper in detail. 
He also thanks Doctor Yuji Odaka for teaching him about the slope stability 
and Doctor Takuzo Okada for a careful reading the draft. 
The author is partially supported by JSPS Fellowships for Young Scientists.

\smallskip

\noindent\textbf{Notation and terminology.}
We always work over the complex number field $\C$. 
For the theory of extremal contraction, we refer the readers to \cite{KoMo}. 
For a smooth projective variety $X$ and a $K_X$-negative extremal ray $R\subset\overline{\NE}(X)$,
we define the \emph{length} $l(R)$ of $R$ by
$$\min\{(-K_X\cdot C)\mid C\text{ is a rational curve with } [C]\in R\}.$$
A rational curve $C\subset X$ with 
$[C]\in R$ and $(-K_X\cdot C)=l(R)$ is called \emph{a minimal rational curve of $R$}.

For a smooth projective variety $X$, we denote the normalization of 
the space of irreducible and reduced rational curves on $X$ by $\Rat(X)$ (see \cite[Definition II.2.11]{kollar}). 
\emph{A family $H$ of rational curves on $X$} always means an irreducible component of $\Rat(X)$. 
We define $\Locus(H)$ to be the locus $\bigcup_{[C]\in H}C$ of curves on $X$ parametrized by $H$. 
For a point $x\in X$, we define $\Locus(H_x)$ to be the locus $\bigcup_{[C]\in H, x\in C}C$ 
of curves on $X$ parametrized by $H$ and passing through $x$.

\medskip

\section{Preliminaries}

\smallskip

In this section, we review the definition of and results on the slope stability of Fano manifolds (for details, see \cite{ylee}).

Let $X$ be a Fano $n$-fold and $Z\subset X$ be a smooth closed subvariety. 
Let $\pi:\hat{X}\rightarrow X$ be the blow up along $Z\subset X$ and $E\subset\hat{X}$ be its exceptional divisor. 
For $k$ with $xk\in\Z_{>0}$ and $k\gg0$, we can write
$$\chi(\hat{X},{\pi}^*(-kK_X)-xkE)=\sum_{j=0}^{n}a_{n-j}(x)k^j,$$
where $a_i(x)\in\Q[x]$. Set $\tilde{a}_i(x):=a_i(0)-a_i(x)$. 

\begin{definition}\label{slopedef}
For $c\in(0,\epsilon(Z,X)]$, set 
$$\mu_c(\sO_Z,-K_X):=\frac{\int_0^c(\tilde{a}_1(x)+\frac{\tilde{a}'_0(x)}{2})dx}{\int_0^c\tilde{a}_0(x)dx}.$$
A Fano manifold $(X,-K_X)$ is \emph{slope stabe} (resp. \emph{slope semistable}) with respect to $Z$ if 
\begin{eqnarray*}
\mu_c(\sO_Z,-K_X)>n/2\ (\text{resp. }\geq n/2)\ 
\text{ for all }c\in(0,\epsilon(Z,X)],
\end{eqnarray*}
where $\epsilon(Z,X)$ is \emph{the Seshadri constant}
$$\max\{c\in{\R}_{\geq 0}|\pi^*(-K_X)-cE \text{ numerically effective}\}.$$
\end{definition}

\begin{proposition}[{\cite[Lemma 2.10]{ylee}}]\label{yleesesh}
Let $X$ be a Fano manifold and $Z$ be a smooth closed subvariety of codimension $r$. 
If $\epsilon(Z,X)\leq r$, then $(X,-K_X)$ is slope stable with respect to $Z$.
\end{proposition}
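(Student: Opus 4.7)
The plan is to reduce the slope formula to a closed form via asymptotic Riemann--Roch on $\hat{X}$ and then read off the inequality. Setting $L(x):=\pi^*(-K_X)-xE$, Riemann--Roch gives
$$a_0(x) = \frac{L(x)^n}{n!},\qquad a_1(x) = \frac{(-K_{\hat X})\cdot L(x)^{n-1}}{2(n-1)!}.$$
Using $-K_{\hat X} = \pi^*(-K_X)-(r-1)E = L(x)+(x-r+1)E$, the subleading coefficient rewrites as $a_1(x) = \tfrac{n}{2}a_0(x) + \tfrac{x-r+1}{2(n-1)!}L(x)^{n-1}\cdot E$. Differentiating $a_0$ in $x$ yields the clean identification $\tilde a_0'(x) = L(x)^{n-1}\cdot E/(n-1)!$, so
$$a_1(x) - \tfrac{n}{2}a_0(x) = \tfrac{x-r+1}{2}\,\tilde a_0'(x).$$

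Second, I would pass to the reduced coefficients $\tilde a_i(x) = a_i(0)-a_i(x)$. The constant correction coming from $a_1(0)$ involves $L(0)^{n-1}\cdot E = (\pi^*(-K_X))^{n-1}\cdot E$; pushing this down to $Z$ via the $\pr^{r-1}$-bundle $E\to Z$ produces $(-K_X|_Z)^{n-1}$, a class of degree $n-1$ on the $(n-r)$-dimensional $Z$, which vanishes whenever $r\geq 2$. When $r=1$, the coefficient $r-1$ itself kills the correction. Either way,
$$\tilde a_1(x) + \tfrac{1}{2}\tilde a_0'(x) = \tfrac{n}{2}\tilde a_0(x) + \tfrac{r-x}{2}\tilde a_0'(x).$$

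Third, I would integrate from $0$ to $c$ and apply integration by parts to the $(r-x)\tilde a_0'(x)$ term, using $\tilde a_0(0)=0$:
$$\int_0^c (r-x)\tilde a_0'(x)\,dx = (r-c)\tilde a_0(c) + \int_0^c \tilde a_0(x)\,dx.$$
Dividing through by $\int_0^c \tilde a_0(x)\,dx$ collapses the slope to the compact identity
$$\mu_c(\sO_Z,-K_X) = \frac{n+1}{2} + \frac{(r-c)\,\tilde a_0(c)}{2\int_0^c \tilde a_0(x)\,dx}.$$

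Finally, for any $c\in(0,\epsilon(Z,X)]$ the class $L(x)$ is nef on $\hat{X}$ for all $x\in[0,c]$, so $L(x)|_E$ is nef on $E$, forcing $\tilde a_0'(x)\geq 0$; hence $\tilde a_0$ is nondecreasing with $\tilde a_0(c)\geq 0$, and the denominator is strictly positive since the leading $x$-behaviour is $\tfrac{1}{n!}\binom{n}{r}x^r(-K_X|_Z)^{n-r}>0$. The Seshadri hypothesis $\epsilon(Z,X)\leq r$ gives $c\leq r$, so the correction term is $\geq 0$ and $\mu_c\geq (n+1)/2 > n/2$. The main obstacle I anticipate is the opening identity $a_1 - \tfrac{n}{2}a_0 = \tfrac{x-r+1}{2}\tilde a_0'$ together with the vanishing of the $x=0$ correction in $\tilde a_1$; once these are in hand, the remainder is routine integration and a positivity observation.
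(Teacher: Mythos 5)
Your proof is correct: the asymptotic Riemann--Roch identifications $a_0(x)=L(x)^n/n!$ and $a_1(x)=(-K_{\hat X})\cdot L(x)^{n-1}/(2(n-1)!)$, the identity $\tilde a_1(x)+\tfrac12\tilde a_0'(x)=\tfrac n2\tilde a_0(x)+\tfrac{r-x}{2}\tilde a_0'(x)$ (which uses $-K_{\hat X}=\pi^*(-K_X)-(r-1)E$ together with the vanishing of $(\pi^*(-K_X))^{n-1}\cdot E$, valid since $(-K_X|_Z)^{n-1}$ dies on the $(n-r)$-dimensional $Z$ for $r\geq 2$ and the factor $r-1$ handles $r=1$), the integration by parts giving $\mu_c=\tfrac{n+1}{2}+\tfrac{(r-c)\tilde a_0(c)}{2\int_0^c\tilde a_0\,dx}$, and the positivity of $\tilde a_0$ on $(0,c]$ via nefness of $L(x)$ and the leading term $\tfrac{1}{n!}\binom{n}{r}x^r(-K_X|_Z)^{n-r}$ all check out, yielding $\mu_c\geq\tfrac{n+1}{2}>\tfrac n2$ for $c\leq r$. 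Note that the paper itself gives no proof of this proposition, quoting it as \cite[Lemma 2.10]{ylee}; your argument is essentially the standard computation behind that cited lemma, so it matches the intended route rather than departing from it.
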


\begin{thm}[{\cite[Theorem 1.2(1)]{ylee}}]\label{yleemain}
Let $X$ be a Fano $n$-fold with $n\geq 3$. 
If $(X,-K_X)$ is not slope stable with respect to a smooth curve $Z$, 
then the curve $Z$ is one of the following 
\begin{enumerate}[$(a)$]
\item
a rational curve whose normal bundle is $\sO_{\pr^1}^{\oplus n-1}$ and $\epsilon(Z,X)=n$;
\item
a rational curve whose normal bundle is $\sO_{\pr^1}^{\oplus n-2}\oplus\sO_{\pr^1}(-1)$;
\item
a line on $\pr^n$.
\end{enumerate}
\end{thm}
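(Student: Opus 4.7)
The plan is to combine Proposition~\ref{yleesesh} with an explicit Riemann--Roch computation of $\mu_c$. Failure of slope stability forces $\epsilon(Z,X) > n-1$; I exploit this Seshadri bound in two stages, first to control the genus of $Z$ via the restriction of $\pi^*(-K_X) - (n-1)E$ to the exceptional divisor of $\pi : \hat X = \Bl_Z X \to X$, then to control $d := -K_X\cdot Z$ and $\sN_{Z|X}$ via the explicit form of the slope inequality.

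Setting $g := g(Z)$, I would compute by Hirzebruch--Riemann--Roch on $\hat X$ (using $-K_{\hat X} = \pi^*(-K_X) - (n-2)E$, the projection formula $\pi_* E^k = (-1)^{k-1} p_*\xi^{k-1}$ for $p = \pi|_E$, and the vanishing of $c_i(\sN_{Z|X})$ for $i \geq 2$ on the curve $Z$) that $\mu_c > n/2$ is equivalent, after integrating and clearing denominators, to positivity of
\[
F(c) \;=\; n\bigl(d+2(g-1)\bigr)c^2 \;-\; 2(n^2-1)(d+g-1)\,c \;+\; n(n^2-1)\,d
\]
on $(0,\epsilon(Z,X)]$, with discriminant $4(n^2-1)[(n-1)(g-1)-d][(n+1)(g-1)+d]$. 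The nefness of $\pi^*(-K_X) - (n-1)E$, restricted to $E = \mathbb{P}(\sN_{Z|X})$, gives nefness of $p^*(-K_X|_Z) + (n-1)\xi$ on $E$; by the slope criterion for projective bundles over a curve this is equivalent to $\mu_{\max}(\sN_{Z|X}) \leq d/(n-1)$. Combined with the tautological inequality $\mu_{\max}(\sN_{Z|X}) \geq \mu(\sN_{Z|X}) = (d+2g-2)/(n-1)$, this yields $g \leq 1$; and at $g=1$ the discriminant is $-4(n^2-1)d^2 < 0$, so $F > 0$ on all of $(0,\epsilon(Z,X)]$ and $(X,-K_X)$ is slope stable, a contradiction. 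Hence $g = 0$ and $Z \cong \pr^1$.

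With $g = 0$, Grothendieck splitting writes $\sN_{Z|X} = \bigoplus_{i=1}^{n-1}\sO_{\pr^1}(a_i)$ with $\sum a_i = d-2$, and the discriminant condition forces $d \leq n+1$. For integer $3 \leq d \leq n$, pigeonhole gives $\mu_{\max}(\sN_{Z|X}) \geq 1$, hence $\epsilon(Z,X) \leq d$; but $F(d) = d[nd^2 - 2(n^2+n-1)d + (n^2-1)(n+2)]$ is positive there (the quadratic factor has roots $n+1-2/n$ and $n+1$, both exceeding $n$), and a direct check shows $d$ lies to the left of the vertex of $F$, so $d < c_-$ and $F > 0$ on $(0,\epsilon(Z,X)]$, contradicting the failure. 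Thus $d \in \{1, 2, n+1\}$; the bound $\mu_{\max}(\sN_{Z|X}) \leq d/(n-1)$ then pins down $\sN_{Z|X} = \sO_{\pr^1}^{\oplus n-2} \oplus \sO_{\pr^1}(-1)$ at $d=1$ (case (b)), $\sN_{Z|X} = \sO_{\pr^1}^{\oplus n-1}$ at $d=2$ with $F = 2(n^2-1)(n-c)$ forcing $\epsilon(Z,X) = n$ (case (a)), and $\sN_{Z|X} = \sO_{\pr^1}(1)^{\oplus n-1}$ at $d = n+1$ with $F = n(n-1)(c-(n+1))^2$; in the last case a Kobayashi--Ochiai-type argument identifies $Z$ as a line on $\pr^n$ (case (c)).

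The main obstacle is the translation in the second paragraph from divisorial nefness on the projective bundle $\mathbb{P}(\sN_{Z|X})$ to the slope bound $\mu_{\max}(\sN_{Z|X}) \leq d/(n-1)$, which requires the full strength of the slope criterion for vector bundles on curves (testing on pullbacks to finite covers, not merely on rational sections). Once this equivalence is in hand the genus reduction $g \leq 1$ is automatic, and the rest of the argument becomes a finite case analysis on integer partitions of $d-2$ subject to the two-sided constraint $\mu_{\max}(\sN_{Z|X}) \in [(d-2)/(n-1),\, d/(n-1)]$.
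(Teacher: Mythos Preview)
The paper does not contain a proof of this statement: Theorem~\ref{yleemain} is quoted verbatim from \cite[Theorem 1.2(1)]{ylee} and used as a black box in the proof of the Main Theorem. There is therefore nothing in the present paper to compare your argument against.

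That said, your sketch follows what is essentially the strategy of the cited source: use Proposition~\ref{yleesesh} to force $\epsilon(Z,X)>n-1$, translate slope (in)stability into the sign of an explicit polynomial $F(c)$ coming from Riemann--Roch on the blow-up, and then exploit the nefness of $\pi^*(-K_X)-(n-1)E$ restricted to $E=\mathbb{P}(\sN_{Z|X})$ to bound the splitting type of the normal bundle. One point to be careful with: your inequality $\mu_{\max}(\sN_{Z|X})\leq d/(n-1)$ depends on the sign convention for the tautological class (whether $E$ is $\mathbb{P}(\sN_{Z|X})$ or $\mathbb{P}(\sN_{Z|X}^\vee)$, and whether $-E|_E$ or $E|_E$ equals $\xi$); as written the direction of the inequality is correct, but the derivation should make the convention explicit. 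Also, in the final case $d=n+1$ you invoke a ``Kobayashi--Ochiai-type argument'' to conclude $X\simeq\pr^n$; this is the right idea (the splitting $\sN_{Z|X}\simeq\sO(1)^{\oplus n-1}$ together with $-K_X\cdot Z=n+1$ and the length/index considerations force the ambient variety), but it is not immediate from the normal bundle alone and deserves at least a reference or a sentence indicating why the index must be $n+1$.
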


\medskip

\section{Proof of the Main Theorem}

\smallskip

This section is devoted to the proof of Theorem \ref{mainthm}. 

\begin{proposition}\label{seshprop}
Let $X$ be a Fano $n$-fold with $n\geq 3$, let $Z\subset X$ be a smooth curve 
and let $\pi:\hat{X}\rightarrow X$ be the blow up along $Z\subset X$. Assume that $\epsilon(Z,X)>n-1$ holds. 
Then either holds:
\begin{enumerate}[$(a)$]
\item
All $K_{\hat{X}}$-negative extremal rays $R$ with $(E\cdot R)>0$ have of fiber type, or 
\item
there exists a prime divisor $D\subset X$ with $D\simeq\pr^{n-1}$ and $\sN_{D|X}\simeq\sO_{\pr^{n-1}}$ such that 
$D$ contains $Z$ as a line. 
\end{enumerate}
\end{proposition}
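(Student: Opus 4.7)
The plan is to convert the Seshadri hypothesis $\epsilon(Z,X)>n-1$ into a nefness statement on $\hat X$, derive a sharp length bound for any $K_{\hat X}$-negative extremal ray $R$ with $(E\cdot R)>0$, and then, assuming (a) fails, analyze the Mori contraction of such a non-fiber-type ray to produce the divisor $D$ claimed in (b).

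Using $K_{\hat X}=\pi^*K_X+(n-2)E$, set $L:=-K_{\hat X}-E=\pi^*(-K_X)-(n-1)E$. The Seshadri hypothesis is precisely that $L$ is nef, and its strictness gives $L\cdot C>0$ whenever $E\cdot C>0$; integrality then yields the sharp bound $-K_{\hat X}\cdot C\geq(E\cdot C)+1\geq 2$. In particular, every $K_{\hat X}$-negative extremal ray $R$ with $(E\cdot R)>0$ has length $l(R)\geq 2$, and its minimal rational curves satisfy the same sharp bound.

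Now suppose (a) fails: fix a non-fiber-type $K_{\hat X}$-negative extremal ray $R$ with $(E\cdot R)>0$, with contraction $\varphi:\hat X\to Y$ and exceptional locus $D'$. I first want to reduce to the divisorial case. By the Ionescu--Wisniewski inequality, $\dim F+\dim D'\geq n+l(R)-1\geq n+1$ for any non-trivial fiber $F$. In the small case $\dim D'\leq n-2$, so $\dim F\geq 3$, and every $R$-curve meets $E$ (since $E\cdot C>0$), hence every non-trivial fiber meets $E$. I plan to derive a contradiction by pushing forward curves in such a fiber via $\pi$ to rational curves in $X$ whose $(-K_X)$-to-multiplicity ratio along $Z$ violates $\epsilon(Z,X)>n-1$. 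With $R$ divisorial, $D'\neq E$ (since $D'\cdot R<0<E\cdot R$) and $D'$ is not $\pi$-exceptional (the only $\pi$-exceptional divisor is $E$), so $D:=\pi(D')\subset X$ is a prime divisor with $\pi^*D=D'+mE$ where $m:=\mathrm{mult}_Z(D)\geq 0$, and $D'$ is the strict transform of $D$.

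It remains to identify $D$ as the divisor in (b). I would analyze $\varphi|_{D'}$: Wisniewski gives fibers of dimension $\geq l(R)\geq 2$, and the family of $R$-curves covers $D'$; transporting via the birational map $\pi|_{D'}:D'\to D$ yields a covering family of rational curves on $D$. The degree identities forced by $L$-positivity and $-K_{\hat X}\cdot C\geq E\cdot C+1$ should identify $D$ with $\pr^{n-1}$ and this family with lines. Adjunction $-K_X|_D=-K_D+\sN_{D|X}|_D$ then pins down $\sN_{D|X}\simeq\sO_{\pr^{n-1}}$, and the locus $E\cap D'$ picks out $Z$ inside $D$ as a line. The hardest step is this final geometric identification, since $\hat X$ is not Fano and standard bundle-structure classifications do not apply directly; I expect to invoke Tsukioka's classification \cite{tsu} of Fano manifolds containing a $\pr^{n-1}$-divisor to finish the analysis.
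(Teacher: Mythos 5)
Your opening step (nefness of $L=\pi^*(-K_X)-(n-1)E$ and the integrality bound $(-K_{\hat{X}}\cdot C)\geq(E\cdot C)+1\geq 2$) matches the paper, but from there the proposal has three genuine problems. First, your premise that $\hat{X}$ is not Fano is false, and it costs you the main engine of the argument: since $\epsilon(Z,X)>n-1>n-2$ and $-K_{\hat{X}}=\pi^*(-K_X)-(n-2)E$, the blow-up $\hat{X}$ \emph{is} Fano, and the results of \cite{ylee} (Lemmas 2.11 and 2.13) then pin down $E$ as either $\pr^1\times\pr^{n-2}$ or $\pr_{\pr^1}(\sO_{\pr^1}^{\oplus n-2}\oplus\sO_{\pr^1}(1))$; this explicit structure of $E$ is indispensable later and is entirely absent from your plan. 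Second, your route for excluding small contractions does not work as sketched: pushing a curve $\hat{C}$ in a fiber forward to $X$ only reproduces the nef inequality $(-K_X\cdot\pi_*\hat{C})\geq(n-1)(E\cdot\hat{C})$, i.e.\ the Seshadri hypothesis gives inequalities in the harmless direction, not a contradiction. The paper's mechanism is different and geometric: any irreducible fiber component $P$ with $\dim P\geq 3$ meets $E$ (as $(E\cdot R)>0$), hence meets some $(n-2)$-dimensional fiber $\pi^{-1}(z)$ in a set of dimension $\geq 3+(n-2)-n=1$, producing a curve contracted by both $\pi$ and $\phi$, which is absurd; so all nontrivial fibers have dimension $\leq 2$, and then the inequality $\dim\Locus(H_x)\geq n-\dim\Locus(H)+(-K_{\hat{X}}\cdot C)-1$ of \cite{ACO} squeezes everything at once: $\dim\Locus(H)=n-1$ (so the contraction is automatically divisorial and smallness never needs separate treatment), $(-K_{\hat{X}}\cdot C)=2$, $(E\cdot C)=1$, and all nontrivial fibers are exactly $2$-dimensional.

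Third, the identification of $D$ --- the actual content of $(b)$ --- is left at the level of ``degree identities should identify $D$ with $\pr^{n-1}$,'' backed by a citation that cannot do the job: \cite{tsu} concerns lengths of extremal rays on Fano fourfolds (the paper invokes it only in the fiber-type case of the Main Theorem), and no classification of Fanos containing a $\pr^{n-1}$-divisor can be invoked here, since the existence of such a $D$ with trivial normal bundle is precisely what must be proved. The paper's chain is: the equalities above put $\phi$ in the range of \cite{AO} (Theorem 5.1), so $Y$ is smooth and $\phi$ is the blow-up along a smooth codimension-$3$ center $B$; then $\phi|_E:E\rightarrow E'=\phi(E)$ is a nontrivial birational morphism, which by the known structure of $E$ forces $E\simeq\pr_{\pr^1}(\sO_{\pr^1}^{\oplus n-2}\oplus\sO_{\pr^1}(1))$, $E'\simeq\pr^{n-1}$ and $B$ a linear $\pr^{n-3}$; intersection numbers on a line $l$ in a $\pi$-fiber ($(E\cdot l)=-1$, $(F\cdot l)=1$) give $\sN_{E'|Y}\simeq\sO_{\pr^{n-1}}$; this determines $F\simeq\pr_{\pr^{n-3}}(\sO_{\pr^{n-3}}^{\oplus 2}\oplus\sO_{\pr^{n-3}}(1))$, and repeating the same analysis for $\pi|_F:F\rightarrow D=\pi(F)$ yields $D\simeq\pr^{n-1}$ with $Z$ a line, while the curve $m$ in a $\phi$-fiber ($(F\cdot m)=-1$, $(E\cdot m)=1$) gives $\sN_{D|X}\simeq\sO_{\pr^{n-1}}$. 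Note that your adjunction idea alone cannot fix the degree of $\sN_{D|X}$ without such an intersection computation. So the skeleton is reasonable, but the three load-bearing inputs (Fano-ness of $\hat{X}$ together with the structure of $E$, the fiber-dimension bound via $\pi$-fibers combined with \cite{ACO}, and the smooth-blow-up theorem of \cite{AO}) are missing, and the step you defer to a classification is exactly where the proof lives.
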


We will improve this result in Theorem \ref{appthm}. 

\begin{proof}
Let $E\subset\hat{X}$ be the exceptional divisor of $\pi$. 
$E$ and $Z$ are Fano manifolds by \cite[Lemma 2.11]{ylee}. Hence we have $Z\simeq\pr^1$ and $E\simeq\pr^1\times\pr^{n-2}$ 
or $\pr_{\pr^1}(\sO_{\pr^1}^{\oplus n-2}\oplus\sO_{\pr^1}(1))$ by \cite[Lemma 2.13]{ylee}. 
$\hat{X}$ is a Fano manifold since $-K_{\hat{X}}=\pi^*(-K_X)-(n-2)E$. 
Furthermore, we have $\epsilon(E,\hat{X})=\epsilon(Z,X)-(n-2)>1$. 
Since $\hat{X}$ is a Fano $n$-fold, 
there exists an extremal ray $R\subset\NE(\hat{X})$ with a minimal rational curve $[C]\in R$ such that $(E\cdot C)>0$. 
We have $(-K_{\hat{X}}\cdot C)\geq 2$ since $(-K_{\hat{X}}\cdot C)\geq\epsilon(E,\hat{X})(E\cdot C)>(E\cdot C)>0$. 
Let $\phi:\hat{X}\rightarrow Y$ be the contraction of $R$.

Assume that $\phi$ is birational. 
It is enough to show that the condition of $(b)$ holds under the assumption. 
Pick a family of rational curves $H\subset\Rat(\hat{X})$ with $[C]\in H$. $H$ is projective 
since $C$ is minimal. Take an arbitrary point $x\in E\cap\Locus(H)$. 
Assume that there exists an irreducible component $P\subset\phi^{-1}\phi(x)$ such that $\dim P\geq 3$. 
Then $E$ must intersect $P$ since $(E\cdot C)>0$, hence there exists a point $z\in Z$ such that $P\cap\pi^{-1}(z)\neq\emptyset$. 
Thus we have $\dim(P\cap\pi^{-1}(z))\geq 1$ since $\dim\pi^{-1}(z)=n-2$, which is a contradiction since 
no curve on $\hat{X}$ is contracted by both $\pi$ and $\phi$. Hence we have $\dim\phi^{-1}\phi(x)\leq 2$. 
We also have $\dim\Locus(H_x)\geq n-\dim\Locus(H)+(-K_{\hat{X}} \cdot C)-1$ by \cite[Proposition 2.5(a)]{ACO}. 
Hence we have 
\begin{eqnarray*}
2 &\geq& \dim\phi^{-1}\phi(x)\geq\dim\Locus(H_x)\\
&\geq& n-\dim\Locus(H)+(-K_{\hat{X}} \cdot C)-1\\
&\geq&(-K_{\hat{X}} \cdot C) \geq 2
\end{eqnarray*}
since $\dim\Locus(H)\leq n-1$. 
Thus we have 
$\dim\Locus(H)=n-1$ and $(-K_{\hat{X}}\cdot C)=2$. 
Therefore $\phi$ is a divisorial contraction and $\Locus(H)$ is the exceptional divisor of $\phi$, 
which we denote by $F$ from now on. 
Furthermore, we have $(E \cdot C)=1$ and $l(R)=2$ holds. 
Thus $\dim\phi^{-1}\phi(x)=2$ holds for all $x\in E\cap F$. 
It follows that $Y$ is a smooth projective variety and $\phi$ is the blow up 
along a smooth closed subvariety $B\subset Y$ with codimension $3$ by \cite[Theorem 5.1]{AO}. 
We have $E\neq F$ since $(E\cdot C)>0$. 
Set $E':=\phi(E)$. 
Then $\phi|_E:E\rightarrow E'$ is not an isomorphism but a birational morphism and 
$E'$ is a normal projective variety with $B\subset E'$ since $\dim(E\cap\phi^{-1}(y))\geq 1$ for all $y\in B$ 
and $E'$ is a divisor on $Y$ with regular in codimension $1$. 
Hence we have $E\simeq\pr_{\pr^1}(\sO_{\pr^1}^{\oplus n-2}\oplus\sO_{\pr^1}(1))$, 
$E'\simeq\pr^{n-1}$ and $\phi|_E:E\rightarrow E'$ 
is the blow up along a linear subspace $B\subset E'$ since $E$ is either isomorphic to 
$\pr^1\times\pr^{n-2}$ or $\pr_{\pr^1}(\sO_{\pr^1}^{\oplus n-2}\oplus\sO_{\pr^1}(1))$. 
Take a curve $l\subset E$ which is a line in a fiber of $\pi\simeq\pr^{n-2}$. 
Then we have $(E'\cdot\pi_*l)=0$ since $(E\cdot l)=-1$ and $(F\cdot l)=1$. 
Hence $\sN_{E'|Y}\simeq\sO_{\pr^{n-1}}$. 
We have $F\simeq\pr_{\pr^{n-3}}(\sO_{\pr^{n-3}}^{\oplus 2}\oplus\sO_{\pr^{n-3}}(1))$ 
since $\sN_{B|E'}\simeq\sO_{\pr^{n-3}}(1)^{\oplus 2}$. Set $D:=\pi(F)$. 
Repeating the same argument of the case $\phi|_{E}:E\rightarrow E'$, we have $D\simeq\pr^{n-1}$ and 
$\pi|_{F}:F\rightarrow D$ is a blow up along a line $Z\subset D\simeq\pr^{n-1}$. 
Take a curve $m\subset F$ which is a line in a fiber of $\phi\simeq\pr^2$. 
Then we have $(D\cdot\phi_*m)=0$ since $(F\cdot m)=-1$ and $(E\cdot m)=1$. 
Thus we have $\sN_{D|X}\simeq\sO_{\pr^{n-1}}$. 
Therefore we have proved the proposition. 
\end{proof}

Now, we prove the ``if" part of Theorem \ref{mainthm}. 
Assume $(X,-K_X)$ is not slope stable with respect to $Z$. 
By Proposition \ref{yleesesh}, $\epsilon(Z,X)>n-1$ holds. 
If $\phi:\hat{X}\rightarrow Y$ constructed as above is birational then 
$\sN_{Z|X}\simeq\sO_{\pr^1}(1)^{\oplus n-2}\oplus\sO_{\pr^1}$ by $(b)$ of Proposition \ref{seshprop}, 
which is a contradiction to Theorem \ref{yleemain}. Thus $\phi$ is of fiber type. 

If $n\geq 4$, this situation of contraction morphisms $\pi$ and $\phi$ is exactly the case studied by Tsukioka. 
We remark that $\dim Y\geq n-2$ since $2=n-\dim\pi^{-1}\pi(x)\geq\dim\phi^{-1}\phi(x)$ holds for all $x\in E$. 
If $\dim Y=n-2$, then $(X,Z)$ is isomorphic to either $(\pr^n,\text{line})$ 
or $(\Q^n,\text{conic})$ by \cite[Proposition 3]{tsu}. 
If $\dim Y=n-1$, then 
$(X,Z)$ is isomorphic to one of $(\Q^n,\text{line})$, $(\pr^1\times\pr^{n-1},\pr^1\times\text{pt})$, 
$(\Bl_{\pr^{n-2}}\pr^n,$ line (disjoint from $\pr^{n-2}$)$)$ or 
$(\Bl_{\pr^{n-2}}\pr^n,\text{excp.line})$  by \cite[Proposition 4]{tsu}. 
Assume $(X,Z)$ is isomorphic to one of $(\Q^n,\text{conic})$, $(\Q^n,\text{line})$ 
or $(\Bl_{\pr^{n-2}}\pr^n,$ line (disjoint from $\pr^{n-2}$)$)$. 
Then $\sN_{Z|X}$ is isomorphic to 
$\sO_{\pr^1}(2)^{\oplus n-1}$, $\sO_{\pr^1}(1)^{\oplus n-2}\oplus\sO_{\pr^1}$ 
or $\sO_{\pr^1}(1)^{\oplus n-1}$, respectively. 
Therefore $(X,-K_X)$ is slope stable with respect to $Z$ by Theorem \ref{yleemain}, whch is a contradiction. 

Now, we consider the case $n=3$. 
Assume $\rho_X=1$. If $l(R)=2$ then we have $\epsilon(Z,X)=r-2\leq 2$, where $r$ is a index of $X$ by \cite[Theorem 5.1]{MoMu83}. 
Hence we have $l(R)=3$ and $(X,Z)$ is isomorphic to $(\pr^3,\text{line})$. 
Now, we assume $\rho_X\geq 2$. 
We have $\sN_{Z|X}\simeq\sO_{\pr^1}^{\oplus 2}$ or $\sO_{\pr^1}\oplus\sO_{\pr^1}(-1)$ by Theorem \ref{yleemain}. 
We can see that $\phi$ is a $\pr^1$-bundle and $E$ is a section of $\phi$ since $l(R)>(E\cdot C)>0$ and $\rho_Y\geq 2$. 
If $\sN_{Z|X}\simeq\sO_{\pr^1}^{\oplus 2}$, then we have $\sN_{E|\hat{X}}\simeq\sO_{\pr^1\times\pr^1}(-1,0)$. Hence we have 
an exact sequence 
$$0\rightarrow\sO_{\hat{X}}\rightarrow\sO_{\hat{X}}(E)\rightarrow\sN_{E|\hat{X}}\rightarrow 0.$$
Thus we obtain 
$$0\rightarrow\sO_{\pr^1\times\pr^1}\rightarrow\phi_*\sO_{\hat{X}}(E)\rightarrow\sO_{\pr^1\times\pr^1}(-1,0)\rightarrow 0.$$
Therefore $(X,Z)$ is isomorphic to $(\pr^1\times\pr^2,\pr^1\times\text{pt})$ since 
$\hat{X}\simeq\pr_{\pr^1\times\pr^1}(\phi_*\sO_{\hat{X}}(E))\simeq
\pr_{\pr^1\times\pr^1}(\sO_{\pr^1\times\pr^1}\oplus\sO_{\pr^1\times\pr^1}(-1,0))$. 
If $\sN_{Z|X}\simeq\sO_{\pr^1}\oplus\sO_{\pr^1}(-1)$, then $(X,Z)$ is isomorphic to 
$(\Bl_{\text{line}}\pr^3,\text{excp.line})$ by the same technique as we have seen above. 
As a consequence, we have completed ``if part" of the proof of Theorem \ref{mainthm}. 

Now, we prove the converse. 
If $(X,Z)$ is isomorphic to $(\pr^n,\text{line})$, then $(X,-K_X)$ is not slope stable 
but slope semistable with respect to $Z$ by \cite[Remark 3.5]{ylee}. 
If $(X,Z)$ is isomorphic to $(\pr^1\times\pr^{n-1},\pr^1\times\text{pt})$, 
then $(X,-K_X)$ is not slope stable but slope semistable with respect to $Z$ by \cite[Example 3.8]{ylee}. 

Now, we consider the case where $(X, Z)$ is isomorphic to $(\Bl_{\pr^{n-2}}\pr^n,\text{excp.line})$. 
By \cite[Proposition 3.1(ii)]{ylee}, 
it is enough to show $\epsilon(Z,X)\geq n$ and $(-K_X\cdot Z)=1$ to see that 
$(X,-K_X)$ is not slope semistable with respect to $Z$. 
We can see $(-K_X\cdot Z)=1$ immediately. 
Let $\sigma$ be the blow up $\sigma:=\Bl_{\pr^{n-2}}:X\rightarrow\pr^n$ and $F$ be its exceptional divisor, 
and let $\pi$ be the blow up $\pi:=\Bl_Z:\hat{X}\rightarrow X$ and $E$ be its exceptional divisor. 
Then the complete linear system 
$|\pi^*(-K_X)-nE|=|\pi^*(\sigma^*\sO_{\pr^n}(1)-F)+n(\pi^*\sigma^*\sO_{\pr^n}(1)-E)|$ 
on $\hat{X}$ is base point free since so are both complete linear systems 
$|\pi^*(\sigma^*\sO_{\pr^n}(1)-F)|$ and $|\pi^*\sigma^*\sO_{\pr^n}(1)-E|$. 
In particular, $\pi^*(-K_X)-nE$ is numerically effective. 
Therefore, $(X,-K_X)$ is not slope semistable with respect to $Z$. 

Hence we have completed the proof of Theorem \ref{mainthm}.

\medskip

\section{Curves on Fano manifolds with large Seshadri constants}

\smallskip

In this section, we make Proposition \ref{seshprop} into the final form. 
We use the classification result of Fano $3$-fold \cite{MoMu} to prove Theorem \ref{appthm} in dimension $3$. 

\begin{thm}\label{appthm}
Let $X$ be a Fano $n$-fold with $n\geq 3$ and let $Z\subset X$ be a smooth curve. 
Assume that $\epsilon(Z,X)>n-1$ holds. Then $(X,Z)$ is isomorphic to one of 
$(\pr^n,\text{line})$,$(\Q^n,\text{conic})$,$(\Q^n,\text{line})$,
$(\pr^1\times\pr^{n-1},\pr^1\times\text{pt})$,
$(\Bl_{\pr^{n-2}}\pr^n,$ line $($disjoint from $\pr^{n-2}))$
 or $(\Bl_{\pr^{n-2}}\pr^n,\text{excp.line}).$
\end{thm}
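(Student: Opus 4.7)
The plan is to apply Proposition \ref{seshprop} and treat its two cases separately.

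In case (a), every $K_{\hat X}$-negative extremal ray $R$ with $(E\cdot R)>0$ is of fiber type; I would pick one such $R$ and let $\phi:\hat X\to Y$ be its contraction. A fiber-dimension count as in the proof of Theorem \ref{mainthm} gives $\dim Y\in\{n-2,n-1\}$, so the pair $(\pi,\phi)$ falls precisely into the set-up of Tsukioka \cite{tsu}. For $n\geq 4$, Tsukioka's Propositions 3 and 4 produce exactly the six pairs in the statement. For $n=3$ I would instead invoke the Mori--Mukai list \cite{MoMu}: if $\rho_X=1$, the Fano-index bound of \cite{MoMu83} together with a direct $l(R)$ computation reduces to $(\pr^3,\text{line})$, $(\Q^3,\text{line})$ and $(\Q^3,\text{conic})$, and if $\rho_X\geq 2$ one sweeps through the finite Mori--Mukai list to isolate those Fano $3$-folds admitting a smooth curve with $\epsilon(Z,X)>2$, the survivors being $(\pr^1\times\pr^2,\pr^1\times\text{pt})$ and the two pairs with $X=\Bl_{\pr^1}\pr^3$.

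In case (b), the divisor $D\simeq\pr^{n-1}$ with trivial normal bundle forces a very rigid structure on $X$. The short exact sequence $0\to\sO_X\to\sO_X(D)\to\sO_D\to 0$, combined with Kodaira vanishing $H^1(X,\sO_X)=0$ (valid since $X$ is Fano), yields $h^0(X,\sO_X(D))=2$, so $|D|$ is a pencil. Triviality of $\sN_{D|X}$ gives $D|_{D'}\sim 0$ for every $D'\in|D|$, so distinct members are disjoint and $|D|$ is base-point-free; it therefore defines a morphism $f:X\to\pr^1$ whose fibers are all isomorphic to $\pr^{n-1}$. Hence $X\simeq\pr_{\pr^1}(\sO(a_1)\oplus\cdots\oplus\sO(a_n))$ with $a_1\leq\cdots\leq a_n$, and Fano-ness forces $\sum_i(a_i-a_1)<2$. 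The only possibilities are $\sO^{\oplus n}$, giving $X\simeq\pr^1\times\pr^{n-1}$, and $\sO^{\oplus n-1}\oplus\sO(1)$, giving $X\simeq\Bl_{\pr^{n-2}}\pr^n$. A quick Seshadri estimate using fibers of the complementary projection shows $\epsilon(Z,X)\leq 2\leq n-1$ for any line $Z$ in a $\pr^{n-1}$-fiber of $\pr^1\times\pr^{n-1}$, excluding that case; in the remaining case, a refined estimate shows that $\epsilon(Z,X)>n-1$ forces $Z$ to be disjoint from the blown-up $\pr^{n-2}$, which recovers $(\Bl_{\pr^{n-2}}\pr^n,\text{line (disjoint from }\pr^{n-2}))$.

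The step I expect to be most delicate is the $n=3$ portion of case (a): without Tsukioka's uniform classification, extracting the precise list of Mori--Mukai pairs and verifying the Seshadri bound on each candidate is a finite but laborious check. The case (b) argument is essentially formal once the pencil $|D|$ is produced; only the final Seshadri refinement needs a little care.
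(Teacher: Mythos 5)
Your case (a) treatment coincides with the paper's: Tsukioka's Propositions 3 and 4 for $n\geq 4$ after the fiber-dimension count $\dim Y\in\{n-2,n-1\}$, and the Mori--Mukai list for $n=3$ (which the paper also invokes, with proof omitted). The genuine problem is the endgame of your case (b). In that case Proposition \ref{seshprop} gives you $Z$ as a line \emph{contained in} $D$, and in your construction $D$ is a member of the base-point-free pencil $|D|$, i.e.\ a fiber of $f:X\rightarrow\pr^1$. But when $X\simeq\Bl_{\pr^{n-2}}\pr^n$, a line disjoint from the blown-up $\pr^{n-2}$ is a \emph{section} of $f$ (its image in $\pr^n$ spans, together with $\pr^{n-2}$, all of $\pr^n$, so it lies in no hyperplane through $\pr^{n-2}$), hence is contained in no fiber; conversely, for every fiber $D$ the intersection with the exceptional divisor $G$ of $\sigma:X\rightarrow\pr^n$ is a hyperplane of $D\simeq\pr^{n-1}$, so \emph{every} line $Z\subset D$ meets $G$. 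Your claimed conclusion ``$\epsilon(Z,X)>n-1$ forces $Z$ disjoint from $\pr^{n-2}$'' is therefore geometrically impossible within case (b): no refined Seshadri estimate can recover that pair there. What is true, and what you must prove instead, is that case (b) is \emph{vacuous}: for a line $Z$ in a fiber, the exceptional line $g$ of $\sigma$ through the point of $Z\cap G$ satisfies $(-K_X\cdot g)=1$ and meets $Z$, so $\epsilon(Z,X)\leq 1$, contradicting $\epsilon(Z,X)>n-1\geq 2$; similarly $\epsilon\leq 2$ excludes lines in fibers of $\pr^1\times\pr^{n-1}$, as you noted. The theorem then survives because all six pairs already arise in case (a), but as written your proof asserts a false output for case (b) and never carries out the exclusion that actually closes it.

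For comparison, the paper handles case (b) quite differently and more briskly: for $n\geq 4$ it contracts an extremal ray $R_X$ with $(D\cdot R_X)>0$, shows all fibers of $\sigma$ are at most one-dimensional, produces a contracted curve $g\neq Z$ with $g\cap Z\neq\emptyset$ and $(-K_X\cdot g)\leq 2$ (Wi\'sniewski), and derives the contradiction $0<(-K_{\hat{X}}\cdot\hat{g})=(-K_X\cdot g)-(n-2)(E\cdot\hat{g})\leq 0$; for $n=3$ it identifies $(X,Z)\simeq(\pr^1\times\pr^2,\text{pt}\times\text{line})$ via the Mori--Mukai list and kills it by $\epsilon(Z,X)=2$. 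Your pencil construction ($h^0(\sO_X(D))=2$ via $H^1(X,\sO_X)=0$, disjointness of members from triviality of $\sN_{D|X}$, Grothendieck splitting, and the Fano bound on the splitting type) is an attractive, uniform-in-$n$ alternative route to constraining $X$, but it has a secondary gap as well: you assert that \emph{all} fibers of $f$ are isomorphic to $\pr^{n-1}$, whereas a priori members of the pencil could be reducible or non-reduced; this needs an argument (rigidity of $\pr^{n-1}$ under degeneration, or a length/fiber-dimension argument for the extremal contraction supported on $D^{\perp}$) before you may write $X$ as a projectivized bundle. Repair both points --- justify the bundle structure and replace the ``recovered pair'' by the exclusion above --- and your case (b) becomes a correct, self-contained substitute for the paper's contraction argument.
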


\begin{proof}
Let $\pi:\hat{X}\rightarrow X$ and $\phi:\hat{X}\rightarrow Y$ be same as in Proposition \ref{seshprop}. 
If $\phi$ is of fiber type, then we have seen  in the proof of Proposition \ref{seshprop} 
that we get the result of Theorem \ref{appthm} for the case $n\geq 4$. 
We also get the same result of Theorem \ref{appthm} for the case $n=3$ by the classification result \cite{MoMu}, 
but we omit the proof. 
Hence we can assume that $\phi$ is birational. We have seen in Proposition \ref{seshprop} 
that the condition of Proposition \ref{seshprop} $(b)$ holds. 

First, we consider the case $n\geq 4$. 
There exists an extremal ray $R_X\subset\NE(X)$ such that $(D \cdot R_X)>0$. 
We denote the contraction of $R_X$ by $\sigma:X\rightarrow W$. 
If there exists $w\in W$ such that $\dim\sigma^{-1}(w)\geq 2$ holds, then $D\cap\sigma^{-1}(w)$ contains a curve. 
However, all curves in $D\simeq\pr^{n-1}$ are numerically proportional, hence $D$ must be contracted to a point by $\sigma$. 
Hence $W$ must be a point since $(D\cdot R_X)>0$. Thus we have $\rho_X=1$ but 
this leads to a contradiction since $\sN_{D|X}\simeq\sO_{\pr^{n-1}}$. Therefore, $\dim\sigma^{-1}(w)\leq 1$ for all $w\in W$. 
Hence $\sigma$ is either of fiber type or a divisorial contraction by \cite[Corollary p.\ 145]{wisn}. 

\begin{claim}
There exists an irreducible curve $g\subset X$ such that $g$ is contracted by $\sigma$, $g\neq Z$ 
and $g\cap Z\neq\emptyset$. 
\end{claim}

\begin{proof}
If $\sigma(Z)=\text{pt}$ then $\sigma(D)=\text{pt}$ since $D\simeq\pr^{n-1}$, 
which is a contradiction as we see above. 
Hence it is enough to show the existence of a nontrivial fiber $g\subset X$  of $\sigma$ with 
$g\cap Z\neq\emptyset$. 
This is obvious in the case $\sigma$ is of fiber type. 
We assume that $\sigma$ is a divisorial contraction. 
We denote the exceptional divisor by $G$. 
We can see that $G\cap D\subset D\simeq\pr^{n-1}$ contains a divisor in $D\simeq\pr^{n-1}$ since $G$ intersects $D$. 
Therefore $G\cap D$ must intersects $Z$. 
\end{proof}

For such a $g\subset X$, let $\hat{g}\subset\hat{X}$ be the strict transform of $g\subset X$. However we have 
$$0<(-K_{\hat{X}}\cdot\hat{g})=(-K_X\cdot g)-(n-2)(E\cdot\hat{g})\leq 0$$
since $n-2\geq 2$ and $(-K_X\cdot g)\leq 2$ by \cite[Theorem (1.1)]{wisn}, 
which is a contradiction. Thus we have proved Theorem \ref{appthm} for the case $n\geq 4$. 

Now, we consider the case $n=3$. 
$\phi:\hat{X}\rightarrow Y$ is a blow up at a smooth point and the complete linear system $|E'|$ on $Y$ 
gives a surjective morphism $Y\rightarrow\pr^1$ since $\sN_{E'|Y}\simeq\sO_{\pr^2}$. 
Then we have $Y\simeq\pr_{\pr^1}(\sO_{\pr^1}^{\oplus 2}\oplus\sO_{\pr^1}(1))$ by the classification result \cite[p.\ 160]{MoMu}. 
Hence $(X,Z)$ is isomorphic to $(\pr^1\times\pr^2,$pt$\times$line$)$. However, we have $\epsilon(Z,X)=2$ in this situation, 
which is a contradiction. 

Therefore we have completed the proof of Theorem \ref{appthm}. 
\end{proof}

\smallskip

\noindent K.\ Fujita\\
Research Institute for Mathematical Sciences (RIMS),\\
Kyoto University, Oiwake-cho, Kitashirakawa, Sakyo-ku, Kyoto 606-8502, Japan \\
fujita@kurims.kyoto-u.ac.jp

\end{document}